\documentclass[12pt,reqno]{amsart}
\usepackage{amssymb,mathrsfs,graphicx}
\usepackage{ifthen}
\usepackage{mathtools}
\usepackage[margin=3cm]{geometry}
\setlength{\parskip}{.25\baselineskip}
\mathtoolsset{showonlyrefs=true}

\title[Weakly singular Euler-Alignment system]{On the Euler-Alignment
    system with weakly singular communication weights}

\author[Changhui Tan]{Changhui Tan}
\address[Changhui Tan]{\newline Department of Mathematics, \ 
 University of South Carolina, 1523 Greene St., Columbia, SC 29208, USA}
\email{tan@math.sc.edu}

%%%% Subject entries to be placed here %%%%
\subjclass[2010]{35Q35, 35Q92}

%%%% Keyword entries to be placed here %%%%
\keywords{Euler-Alignment system, weakly singular interaction,
  critical threshold, blowup}

%\numberwithin{equation}{section}

\newtheorem{theorem}{Theorem}[section]
\newtheorem{lemma}{Lemma}[section]

\newtheorem{proposition}{Proposition}[section]
\newtheorem{remark}{Remark}[section]

\newcommand{\R}{\mathbb R}

\newcommand{\pa}{\partial}

\def\grad{\nabla}
\def\div{\nabla\cdot}

%%%%%%%%%%%%%%%%
\begin{document}
%%%%%%%%%%%%%%%%
\allowdisplaybreaks

\begin{abstract}
We study the pressureless Euler equations with nonlocal alignment
interactions, which arises as a macroscopic representation of complex
biological systems modeling animal flocks. For such Euler-Alignment
system with bounded interactions, a critical threshold phenomenon is
proved in \cite{tadmor2014critical}, where global regularity depends
on initial data. With strongly singular interactions, global
regularity is obtained in \cite{do2018global}, for all initial data.
We consider the remaining case when the interaction is weakly
singular. We show a critical threshold, similar as the system with
bounded interaction. However, different global behaviors may happen for
critical initial data, which reveals the unique structure of the
weakly singular alignment operator.
\end{abstract}

\maketitle %\centerline{\date}
\tableofcontents

\section{Introduction}\label{sec1}
We are interested in the  Euler-Alignment system, which takes the form
\begin{align}
&\pa_t\rho+\div(\rho u)=0,\label{eq:EArho}\\
&\pa_tu+u\cdot\grad u=\int\psi(|x-y|)(u(y)-u(x))\rho(y)dy. \label{eq:EAu}
\end{align}

The system arises as a macroscopic representation of models
characterizing collective behaviors, in particular,
alignment and flocking.

Here, $\rho$ represents the density of the group, and $u$ is the
associated velocity. The term appears at the right hand side of
\eqref{eq:EAu} is the \emph{alignment force}. It was first proposed by
Cucker and Smale in \cite{cucker2007emergent} in the microscopic
model
\begin{equation}\label{eq:CS}
\dot{x}_i=v_i,\quad m\dot{v}_i=\frac{1}{N}\sum_{j=1}^N\psi(|x_i-x_j|)(v_j-v_i).
\end{equation}
$\psi: \R^+\to\R$ is called the \emph{communication weight}, measuring
the strength of the alignment interaction. A natural assumption on
$\psi$ is that it is a decreasing function, as the strength of
interaction is weaker when the distance is larger.

The alignment force in the Cucker-Smale system \eqref{eq:CS}  intends
to align the velocity of all particles as time becomes large. The
corresponding flocking phenomenon has been proved in
\cite{ha2009simple}, under appropriate assumptions on the
communication weight.

The Euler-Alignment system \eqref{eq:EArho}-\eqref{eq:EAu} can be
derived from the Cucker-Smale system \eqref{eq:CS}, through a kinetic
description, as a hydrodynamic limit. See \cite{ha2008particle} for a
formal derivation, \cite{carrillo2010asymptotic,tan2017discontinuous}
for discussions on the kinetic system,
and \cite{figalli2018rigorous,karper2015hydrodynamic} for
rigorous passages to the limit.

\subsection{Bounded interaction}
The Euler-Alignment system \eqref{eq:EArho}-\eqref{eq:EAu} with
bounded Lipschitz was first studied in \cite{tadmor2014critical}, where
a \emph{critical threshold phenomenon} is proved: subcritical initial
data lead to global smooth solutions, while supercritical initial data
lead to finite time singularity formations.

In a successive work \cite{carrillo2016critical}, a sharp critical
threshold condition is obtained in 1D, with the help of an important
quantity
\begin{equation}\label{def:G}
G(x,t) = \pa_xu(x,t)+\int\psi(x-y)\rho(y,t)dy.
\end{equation}

One can easily obtain the dynamics of $G$, see
\cite{carrillo2016critical}, as follows
\begin{equation}\label{eq:G1D}
\pa_tG+\pa_x(Gu)=0.
\end{equation}
This together with the dynamics of $\rho$
\begin{equation}\label{eq:rho1D}
\pa_t\rho+\pa_x(\rho u)=0,
\end{equation}
can serve as an alternative representation of
\eqref{eq:EArho}-\eqref{eq:EAu}. The velocity field $u$ can
be recovered by \eqref{def:G}.

The following theorem shows the sharp critical threshold condition.
\begin{theorem}[\cite{carrillo2016critical}]\label{thm:bounded}
Consider the 1D Euler-Alignment system \eqref{eq:G1D}-\eqref{eq:rho1D}
with smooth initial data $(\rho_0, G_0)$.
\begin{itemize}
\item If $\displaystyle\inf_x G_0(x)\geq0$, then there exists a globally regular
  solution.
\item If $\displaystyle\inf_x G_0(x)<0$, then the solution admits a finite time blowup.
\end{itemize}
\end{theorem}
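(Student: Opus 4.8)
The plan is to use the method of characteristics, exploiting the fact that \eqref{eq:G1D} and \eqref{eq:rho1D} are \emph{both} continuity equations driven by the same velocity field $u$. I would introduce the flow map $\dot X(\alpha,t)=u(X(\alpha,t),t)$ with $X(\alpha,0)=\alpha$, and denote by $'$ the material derivative along it. Reading off the two continuity equations gives $\rho'=-\rho\,\pa_x u$ and $G'=-G\,\pa_x u$, so that $(G/\rho)'=0$: the ratio $G/\rho$ is transported, and in particular the sign of $G$ is preserved along each characteristic. Equivalently, writing $d(\alpha,t)=\pa_\alpha X(\alpha,t)$ for the Jacobian, conservation of mass yields $\rho=\rho_0/d$ and $G=G_0/d$, with $d'=d\,\pa_x u$ and $d(\alpha,0)=1$.

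The main obstacle is the nonlocal alignment term, which couples all characteristics; the saving grace here is that $G$ obeys the closed continuity equation \eqref{eq:G1D}, so the nonlocality enters only through $\psi*\rho$, and for bounded $\psi$ this is controlled crudely. Since $\psi$ is bounded and nonnegative and the total mass $M=\int\rho_0\,dx$ is conserved, one has $0\le(\psi*\rho)(x,t)\le\|\psi\|_{L^\infty}M$ for all $t$. Moreover the definition \eqref{def:G} gives the pointwise identity $\pa_x u=G-\psi*\rho$, which turns the evolution of $G$ into the closed scalar relation
\begin{equation}
G'=-G\,\pa_x u=-G^2+G\,(\psi*\rho)
\end{equation}
along each characteristic. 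Everything now reduces to a scalar analysis.

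For the subcritical case $\inf_x G_0\ge0$, sign preservation gives $G\ge0$ everywhere, hence $\pa_x u=G-\psi*\rho\ge-\|\psi\|_{L^\infty}M$. Integrating $d'=d\,\pa_x u$ then yields $d(\alpha,t)\ge e^{-\|\psi\|_{L^\infty}Mt}>0$, so no characteristic collapses in finite time. Feeding this lower bound on $d$ back into $G=G_0/d$ controls $G$ from above on any finite interval, which together with $\psi*\rho\ge0$ bounds $\pa_x u$ from above as well. Thus $\|\pa_x u\|_{L^\infty}$ and $\|\rho\|_{L^\infty}$ stay finite on every finite time interval, and a standard continuation argument upgrades this into a globally regular solution.

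For the supercritical case $\inf_x G_0<0$, fix $\alpha_*$ with $G_0(\alpha_*)<0$. Along its characteristic $G$ stays negative by sign preservation, so $G\,(\psi*\rho)\le0$ and the closed relation degenerates into the Riccati inequality $G'\le-G^2$. Dividing by $G^2$ gives $(1/G)'=-G'/G^2\ge1$, so the negative quantity $1/G$ increases at unit rate and reaches $0$ no later than $t_*=1/|G_0(\alpha_*)|$; hence $G\to-\infty$ within time $t_*$. Since $\psi*\rho$ remains uniformly bounded, $\pa_x u=G-\psi*\rho\to-\infty$, i.e.\ the solution develops a finite-time singularity. As both dichotomies meet exactly at $\inf_x G_0=0$, the threshold is sharp.
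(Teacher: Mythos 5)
Your proposal is correct and follows essentially the same route as the paper's (sketched) argument: it exploits the Riccati structure $G'=-G^2+G\,(\psi*\rho)$ along characteristics for the supercritical blowup, and in the subcritical case uses the crucial bound $\psi*\rho\le\|\psi\|_{L^\infty}M$ (the content of Proposition~\ref{prop:BKMplus}) to reduce everything to the boundedness of $G$, which your Jacobian identity $G=G_0/d$ delivers. The paper itself only outlines this via Proposition~\ref{prop:BKMplus} and defers to \cite{carrillo2016critical}; your write-up fills in the same details without any gap.
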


For 2D Euler-Alignment system, the threshold conditions are obtained in
\cite{tadmor2014critical}, and also in \cite{he2017global} with a
further improvement. However, neither result is sharp.

\subsection{Strongly singular interaction}
One family of influence functions has the form
\begin{equation}
\psi(r) = r^{-s}.\label{eq:psi}
\end{equation}
When $s>0$, $\psi$ is unbounded at $r=0$. This corresponds to the case
when the alignment interaction becomes very strong as the distance
becomes smaller.

In the case when $s>n$, where $n$ is the dimension, $\psi(|x|)$ is not
integrable at $x=0$. It has been studied recently that the so called
strongly singular interaction has a regularization effect, which
prevents the solution from finite time singularity formations. In 1D, 
global regularity is obtained in \cite{do2018global} for $s\in(1,2)$,
and in \cite{shvydkoy2017eulerian} for $s\in[2,3)$ through a different approach.

\begin{theorem}[\cite{do2018global,shvydkoy2017eulerian}]\label{thm:strong}
Consider the 1D Euler-Alignment system \eqref{eq:G1D}-\eqref{eq:rho1D}
with smooth periodic initial data $(\rho_0, G_0)$. Suppose
$\rho_0>0$. Then, there exists a globally regular solution.
\end{theorem}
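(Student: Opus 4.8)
The plan is to exploit the fact that, for a strongly singular weight $\psi(r)=r^{-s}$ with $s>1$, the alignment operator carries a fractional dissipation strong enough to rule out the finite-time singularity --- density concentration, i.e.\ $\partial_xu\to-\infty$ --- that does occur in the bounded case of Theorem~\ref{thm:bounded}. First I would expose this dissipative structure. Writing $\alpha=s-1\in(0,2)$ and separating the diagonal part of the kernel,
\[
\int\frac{u(y)-u(x)}{|x-y|^{s}}\,\rho(y)\,dy
=-c_\alpha\,\rho(x)\,\Lambda^{\alpha}u(x)
+\int\frac{u(y)-u(x)}{|x-y|^{s}}\bigl(\rho(y)-\rho(x)\bigr)\,dy,
\qquad c_\alpha>0,
\]
the momentum equation \eqref{eq:EAu} becomes $\partial_tu+u\,\partial_xu=-c_\alpha\,\rho\,\Lambda^\alpha u+\mathcal C[u,\rho]$ with a lower-order commutator $\mathcal C[u,\rho]$. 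In this formulation the quantity $G$ of \eqref{eq:G1D} is $G=\partial_xu+\mathcal R\rho$, where $\mathcal R$ is the renormalized fractional operator (of order $\alpha$) replacing the convolution $\psi\ast(\cdot)$, and the identity \eqref{eq:G1D} is precisely the statement that $\partial_x$ of the alignment operator commutes, up to the $\rho$-continuity equation, with transport by $u$.

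Since $\rho$ and $G$ obey the same continuity equation \eqref{eq:rho1D}--\eqref{eq:G1D}, I would extract the conserved ratio: along the flow $\dot x=u$ one has $\tfrac{d}{dt}(G/\rho)=0$, so $\|G/\rho\|_{L^\infty}$ is preserved in time, as are the total mass $\int\rho\,dx$ and, by the maximum principle for the dissipative velocity equation, $\|u\|_{L^\infty}$. The hypothesis $\rho_0>0$ then enters through a density lower bound $\rho\ge\rho_{\min}>0$: because the dissipation is weighted by $\rho$, keeping $\rho$ away from zero is exactly what keeps the fractional operator uniformly coercive, and the transported structure of $G/\rho$ lets this positivity propagate as long as $\partial_xu$ stays controlled.

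The heart of the proof, and the main obstacle, is the a priori bound on $\|\partial_xu\|_{L^\infty}$ --- equivalently $\sup\rho<\infty$ --- which is the quantity that blows up for bounded $\psi$. In the genuinely singular range $s\in(1,2)$ (so $\alpha\in(0,1)$, supercritical for the fractional-Burgers scaling) this is delicate, and I would adapt the modulus-of-continuity method of Kiselev--Nazarov--Volberg: one exhibits a stationary modulus $\omega$ obeyed by $u_0$ and shows it cannot be broken, so that $u$ stays uniformly Lipschitz. In the hypothetical breakthrough scenario where $\omega$ is first saturated, the $\rho$-weighted fractional dissipation dominates the transport nonlinearity at the touching point, the density lower bound being exactly what makes the dissipative gain large enough; this contradiction forbids loss of regularity. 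It is the Euler-Alignment structure --- the conservation of $G/\rho$ coupling the two continuity equations --- that furnishes the density bounds closing this argument. For $s\in[2,3)$, where $\alpha\ge1$ is (sub)critical, one instead uses the stronger dissipation directly, through weighted commutator and energy estimates as in \cite{shvydkoy2017eulerian}.

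Finally, with $\rho$ bounded above and below and $\partial_xu$ bounded in $L^\infty$, the dissipation is uniformly elliptic, and I would bootstrap: Hölder regularity of $u$ follows from the parabolic smoothing of the fractional operator (a De Giorgi-type or Schauder argument), after which higher Sobolev norms propagate by standard energy estimates. Combined with local well-posedness and the continuation criterion $\int_0^T\|\partial_xu\|_{L^\infty}\,dt<\infty$, this yields the claimed globally regular solution.
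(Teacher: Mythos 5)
This statement is not proved in the paper at all: Theorem~\ref{thm:strong} is quoted as background from \cite{do2018global} (for $s\in(1,2)$) and \cite{shvydkoy2017eulerian} (for $s\in[2,3)$), and the present paper's own contributions concern the complementary weakly singular range $s\in(0,1)$, where the conclusion is in fact different (Theorems~\ref{thm:super}--\ref{thm:cblowup}). So there is no in-paper proof to compare against; your proposal can only be measured against the cited works.

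As a sketch of those works, you have the right skeleton --- extracting the $\rho$-weighted fractional dissipation $-c_\alpha\rho\Lambda^\alpha u$ from the alignment term, using the transported quantity $G=\pa_xu-\Lambda^\alpha\rho$ and the conservation of $G/\rho$ along characteristics, obtaining upper and lower bounds on $\rho$ (the lower bound is where $\rho_0>0$ enters, since the dissipation degenerates where $\rho$ vanishes), and closing with a continuation criterion on $\|\pa_xu\|_{L^\infty}$. But the central step as you state it has a genuine gap: for $s\in(1,2)$, i.e.\ $\alpha=s-1\in(0,1)$, the dissipation is \emph{supercritical} relative to the Burgers nonlinearity, and a Kiselev--Nazarov--Volberg modulus-of-continuity argument applied directly to $u$ does not close --- at a breakthrough point the dissipative gain $\sim\rho_{\min}\,\omega(\xi)/\xi^{\alpha}$ does not dominate the transport contribution for a Lipschitz-type modulus at small scales (this is exactly why supercritical fractional Burgers blows up). The argument of \cite{do2018global} instead runs the modulus-of-continuity machinery on the \emph{density} equation $\pa_t\rho+u\pa_x\rho=-\rho G-\rho\Lambda^\alpha\rho$, where the quadratic dissipation $-\rho\Lambda^\alpha\rho$ together with $\rho\geq\rho_{\min}>0$ and $|G|\lesssim\rho$ preserves a (time-dependent) Lipschitz modulus for $\rho$; a Lipschitz bound on $\rho$ combined with $\|\rho\|_{L^\infty}$ then controls $\Lambda^\alpha\rho$ pointwise (since $\alpha<1$), hence $\pa_xu=G+\Lambda^\alpha\rho$, which is what triggers the continuation criterion. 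Your decomposition also brushes aside the off-diagonal term $\int|x-y|^{-s}(u(y)-u(x))(\rho(y)-\rho(x))\,dy$ as ``lower-order,'' which it is not a priori; controlling it is part of the work. The $s\in[2,3)$ case via enhanced dissipation and energy estimates is described accurately at the level of a pointer to \cite{shvydkoy2017eulerian}.
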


Note that since $\psi$ is not integrable, the quantity $G$ in
\eqref{def:G} is not well-defined. For $\psi$ defined in
\eqref{eq:psi}, one can use an alternative quantity
$G=\pa_xu-(-\Delta)^{(s-1)/2}\rho$.
For general choice of $\psi$ with the same singularity at $x=0$,
a similar global regularity result has been obtained in \cite{kiselev2018global}.

The dynamics in 2D is much more complicated and far less
understood. Global regularity has been obtained recently in
\cite{shvydkoy2018global} only for a small class of initial data.

\subsection{Weakly singular interaction}

We are interested in the Euler-Alignment system
\eqref{eq:EArho}-\eqref{eq:EAu} with weakly singular interactions.
This corresponds to the case when $\psi(|x|)$ is integrable, namely
$\psi(r)$ behaves like $r^{-s}$ near origin with $s\in(0,n)$.

In this case, the quantity $G$ is well-defined as long as the solution
$(\rho,u)$ is smooth, since
\[\|G\|_{L^\infty}\leq \|u\|_{W^{1,\infty}}+\|\psi\|_{L^1}\|\rho\|_{L^\infty}.\]
So in 1D, one would expect a similar critical threshold phenomenon as
Theorem \ref{thm:bounded}. However, the result is not always true.

Let us consider a special case when $G_0(x)\equiv0$. Since $G$
satisfies \eqref{eq:G1D}, it is easy to see that $G(x,t)=0$ in all
time. The dynamics of $\rho$ can be written as
\begin{equation}\label{eq:agg}
\pa_t\rho+\pa_x(\rho u)=0,\quad 
u(x,t) = -\int K'(x-y)\rho(y,t)dy,\quad
K''(x)=\psi(x).
\end{equation}
It is the \emph{aggregation equation} with a convex potential $K$ (as
$\psi\geq0$).

The global wellposedness of the aggregation equation has been
well-studied. A sharp Osgood condition has been derived in 
\cite{bertozzi2009blow, bertozzi2011lp, carrillo2011global}, which
distinguishes global regularity and finite time density concentration:
the solution is globally regular if and only if 
\begin{equation}\label{eq:Osgood}
\int_0^1\frac{1}{K'(r)}dr=\infty.
\end{equation}

For weakly singular interaction $\psi\sim r^{-s}$ with $s\in(0,1)$
near origin, or
more precisely, 
\begin{equation}\label{eq:psiweak}
  \lambda r^{-s}\leq \psi(r) \leq \Lambda r^{-s},\quad
  \Lambda\geq\lambda>0, \quad s\in(0,1),
\end{equation}
uniformly in $r\in(0,1]$,
the Osgood condition \eqref{eq:Osgood} is violated, and hence the
solution generates concentrations in finite time. The behavior is
different from the bounded interaction case $(s=0)$, in which \eqref{eq:Osgood}
holds.

In this paper, we study the global behavior of the Euler-Alignment
system with weakly singular interactions.

The following two theorems show a similar behavior to the system with
bounded interactions (Theorem \ref{thm:bounded}), for both
supercritical and subcritical regions of initial data. 
\begin{theorem}[Supercritical threshold condition]\label{thm:super}
Consider the 1D Euler-Alignment system \eqref{eq:G1D}-\eqref{eq:rho1D}
with smooth initial data $(\rho_0, G_0)$ and weakly singular
interaction $\psi$ satisfying \eqref{eq:psiweak}.

If $\displaystyle\inf_x G_0(x)<0$, then the solution admits a finite time blowup.
\end{theorem}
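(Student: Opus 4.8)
The plan is to reduce the blow-up to a scalar Riccati inequality for $G$ along characteristics, exactly as in the bounded case (Theorem~\ref{thm:bounded}); the weak singularity enters only through the sign and finiteness of the nonlocal term, so the supercritical regime is the \emph{easy} direction. First I would introduce the characteristic flow $\dot X(t)=u(X(t),t)$, $X(0)=x_0$, which is well defined for as long as the solution stays smooth. Writing the conservation law \eqref{eq:G1D} in non-conservative form gives $\pa_tG+u\,\pa_xG=-G\,\pa_xu$, and substituting $\pa_xu=G-\psi*\rho$ from the definition \eqref{def:G} (where $(\psi*\rho)(x,t)=\int\psi(x-y)\rho(y,t)\,dy$) yields, along the flow,
\[
\frac{d}{dt}\,G(X(t),t)=-G\bigl(G-\psi*\rho\bigr)=-G^2+G\,(\psi*\rho).
\]

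The key is the sign structure coming from \eqref{eq:psiweak}. Since $\psi\geq\lambda r^{-s}>0$ and $\rho\geq0$, the convolution $\psi*\rho$ is nonnegative, and because $s\in(0,1)$ makes $\psi$ integrable it is also finite, with $\|\psi*\rho\|_{L^\infty}\leq\|\psi\|_{L^1}\|\rho\|_{L^\infty}$. Choosing $x_0$ with $G_0(x_0)<0$ — which exists since $G_0$ is continuous and $\inf_xG_0<0$ — I would set $g(t):=G(X(t),t)$ and observe that as long as $g$ is negative the term $g\,(\psi*\rho)$ is $\leq0$, so $\dot g\leq-g^2$. Since $\dot g<0$ whenever $g<0$, the function $g$ stays negative thereafter, and the differential inequality persists with no further hypothesis. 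A comparison with the solution of $\dot h=-h^2$, $h(0)=G_0(x_0)$, namely $h(t)=G_0(x_0)/\bigl(1+G_0(x_0)\,t\bigr)$, then forces $g(t)\leq h(t)\to-\infty$ as $t\uparrow t^*:=1/|G_0(x_0)|$. The comparison itself is the standard one: $w:=g-h$ satisfies $w(0)=0$ and $\dot w\leq-(g+h)w$, so $\tfrac{d}{dt}\bigl(w\exp\int_0^t(g+h)\bigr)\leq0$ gives $w\leq0$ on the common interval of existence.

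Finally I would convert the blow-up of $G$ into loss of regularity of $(\rho,u)$. The bound $\|G\|_{L^\infty}\leq\|u\|_{W^{1,\infty}}+\|\psi\|_{L^1}\|\rho\|_{L^\infty}$ shows that $G$ must stay bounded on any interval of smooth existence, so the solution cannot remain smooth up to $t^*$. The cleanest packaging is by contradiction: assume a smooth solution on a maximal interval $[0,T_{\max})$ with $T_{\max}>t^*$; then $g$ is a genuine $C^1$ subsolution of the Riccati equation on $[0,t^*]$, contradicting its finiteness there. Hence a finite-time singularity forms at some $T\leq 1/|G_0(x_0)|$.

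As for the main obstacle: the genuinely delicate features of the weakly singular operator do not surface here, being confined to the critical case $\inf_xG_0=0$ where this sign argument degenerates. In the strictly supercritical regime the only points needing care are bookkeeping ones — ensuring the characteristic flow and the quantity $G$ are well defined and $C^1$ on the existence interval so that the comparison principle applies, and confirming that the nonlocal forcing $\psi*\rho$ is finite and nonnegative, which is precisely what the integrability and positivity of $\psi$ in \eqref{eq:psiweak} supply.
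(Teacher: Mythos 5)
Your proposal is correct and follows essentially the same route as the paper: the Riccati inequality $\frac{d}{dt}G(X(t),t)=-G^2+G\,(\psi\ast\rho)\leq -G^2$ along the characteristic through a point with $G_0(x_0)<0$, using only $\psi\ast\rho\geq0$, followed by comparison and the observation that $G$ must remain bounded while the solution is smooth. The only cosmetic difference is that the paper propagates the sign of $G$ via the exact formula $G(X(t),t)=G_0(x_0)\exp\bigl[\int_0^t\pa_xu\,ds\bigr]$ rather than your first-crossing argument, but both are valid.
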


\begin{theorem}[Subcritical threshold condition]\label{thm:sub}
Consider the 1D Euler-Alignment system \eqref{eq:G1D}-\eqref{eq:rho1D}
with smooth initial data $(\rho_0, G_0)$ and weakly singular
interaction $\psi$ satisfying \eqref{eq:psiweak}.
 
If $\displaystyle\inf_x G_0(x)>0$, then there exists a globally regular
  solution.
\end{theorem}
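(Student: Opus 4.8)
The plan is to pass to Lagrangian coordinates and reduce global regularity to the control of a single scalar quantity, the Jacobian of the flow map. Let $X(\alpha,t)$ denote the characteristic flow, $\pa_t X=u(X,t)$ with $X(\alpha,0)=\alpha$, and set $\eta=\pa_\alpha X$, so that $\frac{d}{dt}\eta=(\pa_x u)\,\eta$ along characteristics. Since both $\rho$ and $G$ satisfy the continuity equations \eqref{eq:rho1D} and \eqref{eq:G1D} with the \emph{same} velocity field, the products $\rho\eta$ and $G\eta$ are constant in $t$ along characteristics, giving $\rho\,\eta=\rho_0$ and $G\,\eta=G_0$. Writing $D:=\psi*\rho\geq0$ and using $\pa_x u=G-D$ from \eqref{def:G}, I obtain the closed-form evolution
\begin{equation}\label{eq:etaplan}
\frac{d}{dt}\eta=(G-D)\,\eta=G_0-\eta\,D .
\end{equation}
Because $\eta(\cdot,0)\equiv1>0$, regularity is equivalent to keeping $\eta$ bounded above and below away from $0$: a lower bound on $\eta$ yields the density bound $\rho=\rho_0/\eta\in L^\infty$, while $G=G_0/\eta>0$ stays bounded, so that $\pa_x u=G-D$ is squeezed between $-\|D\|_{L^\infty}$ and $\|G\|_{L^\infty}$.

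The heart of the proof is the lower bound on $\eta_{\min}(t):=\min_\alpha\eta(\alpha,t)$. At a minimizing label $\alpha^*$ the term $G_0(\alpha^*)\geq\inf_x G_0=:g_0>0$ pushes $\eta$ upward in \eqref{eq:etaplan}, so the only danger is the singular self-interaction $\eta_{\min}D$. The key observation --- and the place where the restriction $s<1$ is essential --- is that this product is in fact \emph{small} when $\eta_{\min}$ is small. Rewriting $D$ at $x^*=X(\alpha^*,t)$ in Lagrangian variables via mass conservation gives $D(x^*)=\int\psi\big(|X(\alpha^*,t)-X(\beta,t)|\big)\rho_0(\beta)\,d\beta$, and the elementary estimate $|X(\alpha^*,t)-X(\beta,t)|=\big|\int_\beta^{\alpha^*}\eta\,d\gamma\big|\geq\eta_{\min}|\alpha^*-\beta|$ together with \eqref{eq:psiweak} yields $D(x^*)\leq C\|\rho_0\|_{L^\infty}\eta_{\min}^{-s}+Cm$, where $m=\int\rho_0$; here $\int|\alpha^*-\beta|^{-s}\,d\beta<\infty$ precisely because $s<1$. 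Consequently $\eta_{\min}D\leq C\|\rho_0\|_{L^\infty}\eta_{\min}^{1-s}+Cm\,\eta_{\min}\to0$ as $\eta_{\min}\to0$.

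Plugging this into \eqref{eq:etaplan} and using a barrier argument for the lower right Dini derivative of the Lipschitz function $\eta_{\min}(t)$ gives $\frac{d^+}{dt}\eta_{\min}\geq g_0-C\|\rho_0\|_{L^\infty}\eta_{\min}^{1-s}-Cm\,\eta_{\min}$, whose right-hand side is strictly positive for all small $\eta_{\min}$ since $g_0>0$. Hence there is $\eta_*>0$ with $\eta_{\min}(t)\geq\eta_*$ for all $t$, which is exactly the no-concentration bound. The upper bound $\eta\leq1+\|G_0\|_{L^\infty}t$ is immediate from \eqref{eq:etaplan} since $\eta D\geq0$ (on the torus one even obtains a uniform-in-time bound using $D\geq\psi(\mathrm{diam})\,m>0$). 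I note that $\rho_0>0$ is not needed anywhere: the estimates use only $\rho_0\geq0$, $\rho_0\in L^\infty$, and $g_0>0$.

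Finally, with $\|\rho\|_{L^\infty}$ and $\|\pa_x u\|_{L^\infty}$ controlled on every finite interval, I would close the argument through the standard continuation criterion from the local well-posedness theory, propagating the higher Sobolev norms by a Gr\"onwall estimate to conclude global regularity. The main obstacle is the circular dependence between the lower bound on $\eta$ and the upper bound on the singular potential $D=\psi*\rho$: a priori, concentration of $\rho$ ($\eta_{\min}\to0$) inflates $D$ and could feed back destabilizingly through \eqref{eq:etaplan}. This loop is broken exactly by the gain of the factor $\eta_{\min}^{1-s}$, which is the structural signature of the weakly singular regime, and it is the reason the strict sign $\inf_x G_0>0$ (rather than the non-strict $\geq0$ of the bounded case in Theorem~\ref{thm:bounded}) is required.
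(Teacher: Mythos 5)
Your proof is correct, but it runs along a genuinely different (Lagrangian) track from the paper's (Eulerian) one, and it is worth recording the correspondence. The paper controls $\max_x\rho(\cdot,t)$ directly: the transported ratio $q=\rho/G$ gives the good term $-\rho G\leq -C_1\rho^2$, and a nonlinear maximum principle (Lemma \ref{lem:NMP}, in the spirit of Constantin--Vicol) gives $\psi\ast\rho(x_*)\leq C\rho(x_*)^s$ at the maximum, producing the Riccati inequality $\frac{d}{dt}\rho_{\max}\leq -C_1\rho_{\max}^2+C_2\rho_{\max}^{1+s}$ and hence a uniform density bound; the bound on $G$ and the BKM criterion \eqref{eq:BKM} then close the argument. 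You instead track the Jacobian $\eta$ via $\dot\eta=G_0-\eta D$, using the invariants $\rho\eta=\rho_0$, $G\eta=G_0$ (which encode exactly the same information as the paper's transported $q_0=\rho_0/G_0$), and you replace the nonlinear maximum principle by the bi-Lipschitz estimate $|X(\alpha^*,t)-X(\beta,t)|\geq\eta_{\min}|\alpha^*-\beta|$ to get $D\leq C\eta_{\min}^{-s}+Cm$ at the minimizing label. Under the change of variables $\rho=\rho_0/\eta$ the two differential inequalities are literally equivalent, so the mechanism is the same --- the gain of the factor $\eta_{\min}^{1-s}$ \emph{is} the paper's $\rho^{1+s}$ versus $\rho^2$ competition --- but your version is self-contained (no separate maximum-principle lemma) and makes transparent why only $\inf G_0>0$, $\rho_0\geq0$ and $\rho_0\in L^\infty\cap L^1$ are used, which matches the paper. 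One small repair is needed where you assert $\int|\alpha^*-\beta|^{-s}\,d\beta<\infty$: that integral diverges at infinity for $s\in(0,1)$, and moreover \eqref{eq:psiweak} is only assumed for $r\in(0,1]$. You must split the $\beta$-integral into a near field $|\alpha^*-\beta|\leq a$ (where local integrability of $|\cdot|^{-s}$ for $s<1$ gives the $C\|\rho_0\|_{L^\infty}\eta_{\min}^{-s}a^{1-s}$ term) and a far field (where monotonicity of $\psi$ and the total mass give $C(\eta_{\min}a)^{-s}m$), then choose $a$; this is precisely the splitting in the paper's proof of Lemma \ref{lem:NMP}, and it yields exactly the bound $D\leq C\eta_{\min}^{-s}+Cm$ you claimed, so nothing downstream changes.
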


The theorems imply that different behaviors between systems with
bounded and weakly singular interactions can only happen for 
critical initial data
\[\inf_xG_0(x)=0.\]
The example above ($G_0(x)\equiv0$) falls into this category. The
following theorem describes a large set of critical initial data, with
which the solution blows up in finite time.

\begin{theorem}[Blowup for critical initial data]\label{thm:cblowup}
  Consider the 1D Euler-Alignment system \eqref{eq:G1D}-\eqref{eq:rho1D}
with smooth initial data $(\rho_0, G_0)$ and weakly singular
interaction $\psi$ satisfying \eqref{eq:psiweak}.

If $G_0(x)\geq0$, and there exists an interval $I=[a,b]$ with $a<b$
such that for any $x\in I$, $G_0(x)=0$ and $\rho_0(x)>0$, then the
solution admits a finite time blowup.
\end{theorem}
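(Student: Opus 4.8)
The plan is to exploit the fact that both $\rho$ and $G$ are transported by the \emph{same} velocity field $u$ through the conservative laws \eqref{eq:rho1D} and \eqref{eq:G1D}, which forces $G$ to remain zero on the evolving image of $I$. Introduce the characteristic flow $\dot X(t,\alpha)=u(X(t,\alpha),t)$ with $X(0,\alpha)=\alpha$. Along characteristics one has $\tfrac{d}{dt}\rho=-\rho\,\pa_x u$ and $\tfrac{d}{dt}G=-G\,\pa_x u$, so the ratio $G/\rho$ is conserved wherever $\rho>0$. Since $G_0\equiv0$ and $\rho_0>0$ on $I$, I would conclude that $G\equiv0$ on the evolved interval $I_t:=X(t,I)$ as long as the solution is smooth; positivity of $\rho$ on $I_t$ is self-sustaining because $\tfrac{d}{dt}\rho=-\rho\,\pa_x u=\rho\,(\psi*\rho)\ge0$ there. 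On $I_t$ the defining relation \eqref{def:G} then collapses to $\pa_x u=-(\psi*\rho)\le0$, using $\psi\ge0$ and $\rho\ge0$.

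Next I would track the length $\ell(t)=|I_t|$ and the conserved mass $m:=\int_{I_t}\rho\,dx=\int_I\rho_0\,dx>0$. Because $\pa_x u\le0$ on $I_t$, the Jacobian $J=\pa_\alpha X$ satisfies $\tfrac{d}{dt}J=(\pa_x u)J\le0$, so $I_t$ stays an interval with $\ell(t)\le\ell(0)$; after shrinking $I$ to a subinterval of positive mass I may assume $\ell(0)\le1$, so that $|x-y|\le\ell(t)\le1$ for all $x,y\in I_t$. The heart of the argument is a lower bound on the self-interaction obtained from the singular kernel in \eqref{eq:psiweak} together with mass conservation:
\[
(\psi*\rho)(x,t)\ge\lambda\int_{I_t}|x-y|^{-s}\rho(y,t)\,dy\ge\lambda\,\ell(t)^{-s}\int_{I_t}\rho(y,t)\,dy=\lambda\,m\,\ell(t)^{-s},\qquad x\in I_t,
\]
which yields $\pa_x u\le-\lambda m\,\ell(t)^{-s}$ uniformly on $I_t$.

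I would then close the estimate through the length evolution. Writing the endpoints of $I$ as $a<b$, one has $\dot\ell=u(X(t,b),t)-u(X(t,a),t)=\int_{I_t}\pa_x u\,dx$, so the bound above gives the differential inequality
\[
\dot\ell(t)\le-\lambda m\,\ell(t)^{-s}\cdot\ell(t)=-\lambda m\,\ell(t)^{1-s}.
\]
Since $s\in(0,1)$, this integrates to $\ell(t)^{s}\le\ell(0)^{s}-s\lambda m\,t$, forcing $\ell(t)\to0$ at or before the finite time $t^\ast=\ell(0)^{s}/(s\lambda m)$. A positive length is necessary for a smooth flow, so collapse of $\ell$ against the conserved mass $m>0$ compels $\rho\to\infty$, equivalently $\pa_x u=-(\psi*\rho)\to-\infty$; the solution therefore cannot remain regular up to $t^\ast$, and a finite-time blowup occurs.

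The main obstacle I anticipate is the convolution lower bound and the precise role of the exponent: one must justify discarding the (nonnegative) contribution of $\rho$ outside $I_t$ and replacing $|x-y|^{-s}$ by $\ell^{-s}$, which is exactly what the uniform singular lower bound in \eqref{eq:psiweak} permits once $\ell\le1$. The exponent $s\in(0,1)$ is essential here: it produces the superlinear sink $\dot\ell\le-\lambda m\,\ell^{1-s}$ with $1-s<1$, which reaches zero in finite time, whereas the bounded case $s=0$ would only give $\dot\ell\le-\lambda m\,\ell$ and mere exponential decay, consistent with the \emph{absence} of critical blowup in Theorem \ref{thm:bounded}. Care is also needed to confirm that all manipulations—conservation of $G/\rho$, of mass, and monotonicity of $J$—remain valid precisely on the smooth existence interval, so that reaching $t^\ast$ genuinely contradicts global regularity.
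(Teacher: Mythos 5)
Your proposal is correct and follows essentially the same route as the paper: reduce to aggregation-type dynamics on the evolved interval where $G\equiv0$, conserve the mass between the endpoint characteristics, bound the kernel below by $\lambda\,(\mathrm{length})^{-s}$, and derive the superlinear differential inequality $\dot\ell\lesssim-\ell^{1-s}$ forcing collapse in finite time. Your explicit handling of the restriction $r\le1$ in \eqref{eq:psiweak} (by shrinking $I$ and noting $\ell$ is non-increasing) is a small point the paper glosses over, but the argument is the same.
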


The theorem says, if $G_0$ reaches zero in any non-vacuum interval, then
the solution will blow up in finite time. Very importantly, such
initial data will lead to a global smooth solution if the
communication weight is regular, due to Theorem \ref{thm:bounded}.
The different long time behaviors distinguish the two types of
interactions, and reveal the unique property of the weakly singular
interactions.

\medskip

The rest of the paper is organized as follows. In section
\ref{sec:local}, we develop a local wellposedness theory of the 1D
Euler-Alignment system, as well as a Beale-Kato-Majda criteria that
ensures the regularity.
Sections \ref{sec:super} and \ref{sec:sub} are devoted to prove
Theorem \ref{thm:super} and \ref{thm:sub}, respectively. A nonlinear
maximum principle is introduced to take care of the weak singularity
on the communication weight. The critical case is investigated in
section \ref{sec:critical}. We introduce a new proof for the blowup of
the aggregation equation. It utilizes local information and can be
extended to the Euler-Alignment system, proving Theorem
\ref{thm:cblowup}. Finally, in Section \ref{sec:multid}, we make
comments on the extension of our theory to higher dimensions.

\section{Local wellposedness and blowup criterion}\label{sec:local}
We start our discussion with a local wellposedness theory of our main
system in 1D. Recall the 1D Euler-Alignment system in $(\rho, G)$
representation
\begin{align}
&\pa_t\rho+\pa_x(\rho u)=0,\label{eq:rho}\\
&\pa_tG+\pa_x(G u)=0,\label{eq:G}\\
&\pa_xu=G-\psi\ast\rho, \label{eq:u}
\end{align}
where $\ast$ stands for convolution in $x$ variable.

\begin{theorem}[Local wellposedness]\label{thm:local}
Consider the 1D Euler-Alignment system \eqref{eq:rho}-\eqref{eq:u}
with smooth initial data with finite mass
$(\rho_0, G_0)\in (H^s\cap L_+^1)(\Omega)\times H^s(\Omega)$. 
Suppose the communication
weight is integrable:
\begin{equation}\label{eq:psiassumption}
\psi\in L^1(\Omega).
\end{equation}
Then, there exists a time $T>0$ such that the solution 
\[(\rho,G)\in \mathcal{C}([0,T]; (H^s\cap L^1_+)(\Omega))\times \mathcal{C}([0,T];
H^s(\Omega)).\]
Moreover, the solution stays smooth up to time $T$ as long as 
\begin{equation}\label{eq:BKM}
\int_0^T\left(\|\rho(\cdot,t)\|_{L^\infty}+\|G(\cdot,t)\|_{L^\infty}\right)dt<+\infty.
\end{equation}
\end{theorem}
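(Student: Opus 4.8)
The plan is to prove local wellposedness by a standard energy method on the system \eqref{eq:rho}-\eqref{eq:u}, treating the pair $(\rho, G)$ as the unknowns and recovering $u$ through the elliptic relation \eqref{eq:u}. First I would set up $H^s$ energy estimates for $\rho$ and $G$ simultaneously. Differentiating \eqref{eq:rho} and \eqref{eq:G} $s$ times, multiplying by the corresponding derivatives, and integrating, the transport-type structure $\pa_t f + \pa_x(fu) = 0$ yields the familiar commutator estimates: the worst terms are of the form $\int \pa_x^s(fu)\,\pa_x^s f$, which after integration by parts and the Kato-Ponce commutator estimate are controlled by $\|\pa_x u\|_{L^\infty}\|f\|_{H^s}^2$ plus lower-order contributions involving $\|f\|_{H^s}\|u\|_{H^{s+1}}$. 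The essential point is that $\pa_x u = G - \psi\ast\rho$ from \eqref{eq:u}, so $\pa_x u$ inherits $H^s$ regularity directly from $G$ and $\rho$: by Young's convolution inequality and \eqref{eq:psiassumption}, $\|\psi\ast\rho\|_{H^s} \leq \|\psi\|_{L^1}\|\rho\|_{H^s}$. Thus $u$ gains one derivative over $(\rho, G)$ without any loss, and the integrability assumption \eqref{eq:psiassumption} is exactly what makes the nonlocal term behave like a bounded operator on $H^s$. Assembling these, I would close a differential inequality of the form
\begin{equation}
\frac{d}{dt}\left(\|\rho\|_{H^s}^2 + \|G\|_{H^s}^2\right) \leq C\left(\|\rho\|_{H^s}^2 + \|G\|_{H^s}^2\right)^{3/2},
\end{equation}
whose local-in-time solvability furnishes the existence time $T>0$.

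**Existence and preservation of structure.** To upgrade the a priori estimate to an actual solution I would use a standard iteration or mollification (vanishing viscosity) scheme: regularize the initial data and the convolution kernel, solve the approximate system, obtain uniform $H^s$ bounds from the estimate above on a uniform time interval, and pass to the limit using Aubin-Lions compactness, with uniqueness following from an $L^2$ energy estimate on the difference of two solutions. Along the way I must verify that the nonnegativity $\rho \geq 0$ and the $L^1$ mass bound are propagated. Nonnegativity follows because \eqref{eq:rho} is a continuity equation: along the characteristics $\dot X = u(X,t)$, one has $\frac{d}{dt}\rho(X(t),t) = -\rho\,\pa_x u$, so $\rho$ stays nonnegative if it starts so, and conservation of $\int \rho\,dx$ follows from the divergence form. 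This is why the solution remains in $L^1_+$.

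**The blowup criterion.** For the Beale-Kato-Majda continuation criterion \eqref{eq:BKM}, the key observation is that the entire forcing of the $H^s$ energy inequality can be bootstrapped down to depend only on $\|\pa_x u\|_{L^\infty}$, and this quantity is in turn controlled by $\|G\|_{L^\infty}$ and $\|\psi\ast\rho\|_{L^\infty} \leq \|\psi\|_{L^1}\|\rho\|_{L^\infty}$ via \eqref{eq:u} and Young's inequality. Hence I would refine the energy estimate to the linear-in-$H^s$ form
\begin{equation}
\frac{d}{dt}\left(\|\rho\|_{H^s}^2 + \|G\|_{H^s}^2\right) \leq C\left(\|\rho\|_{L^\infty} + \|G\|_{L^\infty}\right)\left(\|\rho\|_{H^s}^2 + \|G\|_{H^s}^2\right),
\end{equation}
so that Grönwall's inequality shows the $H^s$ norm cannot blow up as long as the time integral in \eqref{eq:BKM} is finite. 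The nontrivial part is isolating the $L^\infty$ norms as the controlling quantities in the commutator terms: the commutator $\|[\pa_x^s, u]\pa_x f\|_{L^2}$ must be estimated so that top-order derivatives of $u$ (hence of $\rho$, $G$) appear only paired with $L^\infty$ norms of the functions themselves, never with their high Sobolev norms at top order.

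**Main obstacle.** The delicate step is making the commutator estimates genuinely linear in the $H^s$ energy, with coefficient depending only on the $L^\infty$ norms. The Kato-Ponce inequality naturally produces a term $\|\pa_x u\|_{L^\infty}\|f\|_{H^s}$, which is fine, but the companion term $\|u\|_{\dot H^{s+1}}\|f\|_{L^\infty}$ threatens to reintroduce a full $H^{s+1}$ norm of $u$, i.e.\ a top-order norm of $(\rho, G)$, into the estimate. The resolution is that $\|u\|_{\dot H^{s+1}} = \|\pa_x u\|_{\dot H^s} \leq \|G\|_{H^s} + \|\psi\|_{L^1}\|\rho\|_{H^s}$ stays at order $s$ in $(\rho, G)$ and is therefore absorbable, so this term pairs the $H^s$ energy with $\|f\|_{L^\infty}$ rather than with another $H^s$ factor. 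Verifying that the weak singularity of $\psi$ never forces a derivative loss---which is guaranteed precisely by \eqref{eq:psiassumption}, so that convolution with $\psi$ is a bounded operator on every $H^\sigma$---is the crux that keeps the whole argument closed.
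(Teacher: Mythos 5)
Your proposal follows essentially the same route as the paper: close an $H^s$ energy estimate for the $(\rho,G)$ system by using the relation $\pa_x u = G - \psi\ast\rho$ together with Young's inequality $\|\psi\ast\rho\|_{H^\sigma}\le\|\psi\|_{L^1}\|\rho\|_{H^\sigma}$, so that the growth rate is controlled by $\|\rho\|_{L^\infty}+\|G\|_{L^\infty}$, and then conclude by Gr\"onwall. The paper simply cites the Moser-type transport estimate from \cite{carrillo2016critical} and omits the approximation/compactness construction you spell out, but the key mechanism and the derivation of the BKM criterion are identical to yours.
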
 

\begin{proof}
We first state an $H^s$-estimate on $\rho$
\[\frac{d}{dt}\|\rho(\cdot,t)\|_{H^s}^2\lesssim\left[\|\rho\|_{L^\infty}+\|\pa_xu\|_{L^\infty}\right]\left(\|\rho\|_{H^s}^2+\|\pa_xu\|_{H^s}^2\right).\]
The proof can be found, for instance, in \cite[Theorem Appendix
A.2]{carrillo2016critical}.

As $G$ satisfies the same continuity equation as $\rho$, we have
\begin{equation}\label{eq:GHs}
\frac{d}{dt}\|G(\cdot,t)\|_{H^s}^2\lesssim\left[\|G\|_{L^\infty}+\|\pa_xu\|_{L^\infty}\right]\left(\|G\|_{H^s}^2+\|\pa_xu\|_{H^s}^2\right).
\end{equation}

Putting these two estimate together, we obtain
\[\frac{d}{dt}\big(\|\rho(\cdot,t)\|_{H^s}^2+\|G(\cdot,t)\|_{H^s}^2\big)\lesssim
\left[\|\rho\|_{L^\infty}+\|G\|_{L^\infty}+\|\pa_xu\|_{L^\infty}\right]
\left(\|\rho\|_{H^s}^2+\|G\|_{H^s}^2+\|\pa_xu\|_{H^s}^2\right).\]

From the relation \eqref{eq:u}, we can estimate $\pa_xu$ by $\rho$
and $G$ as follows. For a fixed time $t$,
\begin{align*}
& \|\pa_xu\|_{L^\infty}\leq\|\psi\ast\rho\|_{L^\infty}+\|G\|_{L^\infty}\leq
\|\psi\|_{L^1}\|\rho\|_{L^\infty}+\|G\|_{L^\infty},\\
& \|\pa_xu\|_{H^s}\leq\|\psi\ast\rho\|_{H^s}+\|G\|_{H^s}\leq
\|\psi\|_{L^1}\|\rho\|_{H^s}+\|G\|_{H^s}.
\end{align*}

Since $\|\psi\|_{L^1}$ is bounded, we now arrive at the estimate
\[\frac{d}{dt}\big(\|\rho(\cdot,t)\|_{H^s}^2+\|G(\cdot,t)\|_{H^s}^2\big)\lesssim
\left[\|\rho(\cdot,t)\|_{L^\infty}+\|G(\cdot,t)\|_{L^\infty}\right]
\left(\|\rho(\cdot,t)\|_{H^s}^2+\|G(\cdot,t)\|_{H^s}^2\right).\]
Standard Gronwall inequality implies
\[\|\rho(\cdot,t)\|_{H^s}^2+\|G(\cdot,t)\|_{H^s}^2\leq
\big(\|\rho_0\|_{H^s}^2+\|G_0\|_{H^s}^2\big)\exp
\left[\int_0^t \|\rho(\cdot,s)\|_{L^\infty}+\|G(\cdot,s)\|_{L^\infty}
  ds\right].\]

Therefore, if condition \eqref{eq:BKM} is satisfied, 
$\rho(\cdot,t), G(\cdot,t)\in H^s(\R)$ for all $t\in[0,T]$. This ends
the proof of the theorem.
\end{proof}

We shall make several remarks regarding Theorem \ref{thm:local}.
\begin{remark}
A local wellposedness proof for 1D Euler-Alignment system has been
done in \cite[Theorem Appendix A.1]{carrillo2016critical}, with an
additional assumption on $\psi$
\[\int_\Omega x\psi'(x)dx<+\infty.\]
Here, we relax the assumption by making use of the $(\rho,G)$
formulation of the system. 

If assumption \eqref{eq:psiassumption} is violated, namely $\psi$ is
not integrable at the origin, then the behavior of the equation
changes dramatically due to the strongly singular interaction. We
refer to \cite{do2018global, kiselev2018global} for discussions on
local and global regularities under such setup.
\end{remark}

\begin{remark}
  Condition \eqref{eq:BKM} is called the Beale-Kato-Majda (BKM) type
  criteria. It provides a sufficient and necessary condition under which the
  solution stays smooth. Condition \eqref{eq:BKM} is equivalent to 
\begin{equation}\label{eq:BKMux}
\int_0^T\|\pa_xu(\cdot,t)\|_{L^\infty}dt<+\infty,
\end{equation}
which is a standard sufficient condition to ensure the wellposedness of
  the characteristic paths for presureless Euler dynamics.
The equivalency is due to the following estimates
  \begin{align*}
    &\int_0^T\|\pa_xu(\cdot,t)\|_{L^\infty}dt\leq
  \int_0^T\left(\|\rho(\cdot,t)\|_{L^\infty}+\|\psi\|_{L^1}\|G(\cdot,t)\|_{L^\infty}\right)dt,\\
    &\|\rho(\cdot,t)\|_{L^\infty}+\|G(\cdot,t)\|_{L^\infty}\leq(\|\rho_0\|_{L^\infty}+\|G_0\|_{L^\infty})\exp\left[\int_0^t\|\pa_xu(\cdot,s)\|_{L^\infty}ds\right].
  \end{align*}
\end{remark}

\begin{remark}\label{rem:addconst}
When $\Omega=\R$, assumption \eqref{eq:psiassumption} can be further
generalized to
\[\psi\in L^1(\R)+\textnormal{const}.\]
This allows us to include more types of communication weight, for
instance $\psi\equiv1$. We include a short proof for the sake of completeness.
\end{remark}

\begin{proof}[Proof of Remark \ref{rem:addconst}]
Let $\psi=\psi_0+c$, where $\psi_0\in L^1(\R)$ and $c$ is a
constant. Define $G=\pa_xu+\psi_0\ast\rho$. Then, the $(\rho, G)$
representation of the 1D Euler-Alignment system reads
\[\pa_t\rho+\pa_x(\rho u)=0,\quad
\pa_tG+\pa_x(G u)=-cm\pa_xu,\quad
\pa_xu = G-\psi_0\ast\rho,\]
where $m=\int_\Omega\rho_0(x)dx$ is the total mass which is preserved in
time.

Due to the extra term in the dynamics of $G$, the $H^s$ estimate on
$G$ \eqref{eq:GHs} becomes
\begin{align*}
\frac{d}{dt}\|G(\cdot,t)\|_{H^s}^2\lesssim&\left[\|G\|_{L^\infty}+\|\pa_xu\|_{L^\infty}\right]\left(\|G\|_{H^s}^2+\|\pa_xu\|_{H^s}^2\right)+cm\|G\|_{H^s}\|\pa_xu\|_{H^s}\\
\lesssim&\left[1+\|G\|_{L^\infty}+\|\pa_xu\|_{L^\infty}\right]\left(\|G\|_{H^s}^2+\|\pa_xu\|_{H^s}^2\right).
\end{align*}

The rest of the proof stays the same as Theorem \ref{thm:local}.
\end{proof}

A natural question would be, whether the BKM criteria \eqref{eq:BKM}
can be further reduced to
\begin{equation}\label{eq:BKM2}
\int_0^T\|G(\cdot,t)\|_{L^\infty}dt<+\infty.
\end{equation}
In another word, whether boundedness of $G$ implies boundedness of
$\rho$. If so, global regularity of the system becomes equivalent to the
boundedness of $G$.

The following proposition shows that condition \eqref{eq:BKM2} indeed
serves as a BKM criterion for the 1D Euler-Alignment system, when the
communication weight is bounded.

\begin{proposition}[An enhanced BKM criterion for system with bounded
  interactions]
\label{prop:BKMplus}
Consider the initial value problem of the 1D Euler-Alignment system
\eqref{eq:rho}-\eqref{eq:u} with smooth initial data $(\rho_0,G_0)\in
(H^s\cap L^1_+)(\Omega)\times H^s(\Omega)$.
Suppose the communication weight is bounded
and integrable:
\[\psi\in (L^1\cap L^\infty)(\Omega)+\textnormal{const}.\]
Suppose criteria \eqref{eq:BKM2} is satisfied for time $T$.
Then, the solution is smooth up to time $T$, namely
\[(\rho,G)\in \mathcal{C}([0,T]; (H^s\cap L^1_+)(\Omega))\times \mathcal{C}([0,T];
H^s(\Omega)).\]
\end{proposition}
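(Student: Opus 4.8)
The plan is to show that the apparently weaker criterion \eqref{eq:BKM2} already implies the full criterion \eqref{eq:BKM} of Theorem \ref{thm:local}, by bounding $\|\rho(\cdot,t)\|_{L^\infty}$ in terms of $\|G(\cdot,t)\|_{L^\infty}$ whenever $\psi$ is bounded. To this end I would work along the characteristic flow $\dot X(t)=u(X(t),t)$, which is well-defined as long as the solution remains smooth. Rewriting the continuity equation \eqref{eq:rho} as $\pa_t\rho+u\,\pa_x\rho=-\rho\,\pa_xu$ and substituting \eqref{eq:u}, one obtains along each characteristic
\[
\frac{d}{dt}\rho(X(t),t)=-\rho\,\pa_xu=-\rho\,G+\rho\,(\psi\ast\rho).
\]

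The decisive step is to control the nonlinear feedback term $\rho\,(\psi\ast\rho)$. Since $\psi$ is bounded and the total mass $m=\int_\Omega\rho_0\,dx=\|\rho(\cdot,t)\|_{L^1}$ is conserved, Young's inequality gives $0\le(\psi\ast\rho)(x)\le\|\psi\|_{L^\infty}\,m$ pointwise. Combined with $-\rho\,G\le\rho\,\|G\|_{L^\infty}$ and $\rho\ge0$, this yields the differential inequality $\frac{d}{dt}\rho\le\rho\big(\|G(\cdot,t)\|_{L^\infty}+\|\psi\|_{L^\infty}m\big)$ along every characteristic. A Gronwall argument, followed by taking the supremum over characteristics (which sweep out all of $\Omega$), then produces
\[
\|\rho(\cdot,t)\|_{L^\infty}\le\|\rho_0\|_{L^\infty}\exp\Big[\|\psi\|_{L^\infty}\,m\,t+\int_0^t\|G(\cdot,s)\|_{L^\infty}\,ds\Big].
\]
Under assumption \eqref{eq:BKM2} and for finite $T$, the right-hand side is bounded on $[0,T]$, hence $\int_0^T\|\rho(\cdot,t)\|_{L^\infty}\,dt<\infty$, so \eqref{eq:BKM} holds and Theorem \ref{thm:local} furnishes the desired regularity.

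For the general weight $\psi=\psi_0+c$ with $\psi_0\in L^1\cap L^\infty$, I would run the same computation with the modified quantity $G=\pa_xu+\psi_0\ast\rho$ and relation $\pa_xu=G-\psi_0\ast\rho$ from the proof of Remark \ref{rem:addconst}; the continuity equation for $\rho$ is unchanged, the extra constant $c$ only enters the $G$-dynamics, and $\psi_0$ bounded again gives $\psi_0\ast\rho\le\|\psi_0\|_{L^\infty}m$, so the estimate on $\rho$ carries over verbatim.

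The main obstacle is precisely the bound on $\psi\ast\rho$: boundedness of $\psi$ is what converts the quadratic self-interaction $\rho\,(\psi\ast\rho)$ into a term linear in $\rho$ with coefficient controlled by the conserved mass. For a weakly singular weight this step fails, since $\psi\ast\rho$ can no longer be bounded by the mass alone and the feedback remains genuinely nonlinear, which is exactly why \eqref{eq:BKM2} cannot replace \eqref{eq:BKM} in that regime.
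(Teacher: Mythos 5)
Your proposal is correct and follows essentially the same route as the paper: you show that criterion \eqref{eq:BKM2} implies \eqref{eq:BKM} by tracking $\rho$ along characteristics and using the boundedness of $\psi$ together with conservation of mass to bound $\psi\ast\rho$ (equivalently $\pa_xu$) linearly, yielding the same Gronwall estimate $\|\rho(\cdot,t)\|_{L^\infty}\le\|\rho_0\|_{L^\infty}\exp\bigl[\|\psi\|_{L^\infty}mt+\int_0^t\|G(\cdot,s)\|_{L^\infty}ds\bigr]$. Your explicit treatment of the $\psi=\psi_0+c$ case and your closing remark on why the argument fails for weakly singular weights are consistent with the paper's surrounding discussion.
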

\begin{proof}
It suffies to prove that \eqref{eq:BKM2} implies \eqref{eq:BKM}.

Consider the characteristic path $X(t):=X(t; x_0)$ starting at $x_0\in\Omega$
\[\frac{d}{dt}X(t; x_0)=u(X(t; x_0),t),\quad X(0; x_0)=x_0.\]
As $\rho$ satisfies the continuity equation \eqref{eq:rho}, we get
\[\frac{d}{dt}\rho(X(t),t)=-\pa_xu(X(t),t)\rho(X(t),t).\]
Then,
\[\rho(X(t),t)=\rho_0(x)\exp\left[-\int_0^t \pa_xu(X(s),s) ds\right]
\leq\rho_0(x)\exp\left[\int_0^t\|\pa_xu(\cdot,s)\|_{L^\infty}ds\right].\]
Since $\psi$ is bounded, we can estimate
\begin{equation}\label{eq:BKMest}
\|\pa_xu(\cdot,t)\|_{L^\infty}\leq\|G(\cdot,t)\|_{L^\infty}+m\|\psi\|_{L^\infty}.
\end{equation}
Therefore, we get
\[\|\rho(\cdot,t)\|_{L^\infty}\leq\|\rho_0\|_{L^\infty}\exp\left[
m\|\psi\|_{L^\infty}t+\int_0^t\|G(\cdot,t)\|_{L^\infty}ds\right].\]
Hence, the boundedness of $G$ does imply the boundedness of $\rho$.
\end{proof}

Using Proposition \ref{prop:BKMplus}, one can easily prove Theorem
\ref{thm:bounded}, by showing criterion \eqref{eq:BKM2} is satisfied
if and only if $\inf_xG_0(x)\geq0$. We refer readers to
\cite{carrillo2016critical} for details.

When the communication weight is weakly singular, Proposition
\ref{prop:BKMplus} might be false. In particular, the estimate
\eqref{eq:BKMest} is no longer available. One alternative bound could
be
\[\|\pa_xu(\cdot,t)\|_{L^\infty}\leq\|G(\cdot,t)\|_{L^\infty}
+\|\rho(\cdot,t)\|_{L^\infty}\|\psi\|_{L^1}.\]
It implies an implicit bound
\[\|\rho(\cdot,t)\|_{L^\infty}\leq\|\rho_0\|_{L^\infty}\exp\left[
m\|\psi\|_{L^1}\int_0^t\|\rho(\cdot,s)\|_{L^\infty}ds+\int_0^t\|G(\cdot,t)\|_{L^\infty}ds\right],\]
which is not enough to obtain boundedness of $\rho$.

In fact, a counter example such that Proposition \ref{prop:BKMplus}
fails for weakly singular interaction has been mentioned in the
introduction, where $G_0(x)\equiv0$. The corresponding aggregation
system \eqref{eq:agg} is known to have a finite time loss of
regularity as long as $\psi$ is unbounded at the origin.
Therefore, the global regularity theory of Euler-Alignment system with
bounded interaction can not be directly extended to the case when the
interaction is weakly singular.

\section{Supercritical threshold condition}\label{sec:super}
\subsection{Finite time blowup on $G$}
In this section, we prove Theorem \ref{thm:super}: solution forms a
singularity in finite time, for supercritical initial data
\[\inf_{x\in\Omega} G_0(x)<0.\]

Under such configuration, there exists an $x_0\in\Omega$ such that
$G_0(x_0)<0$. Denote $X(t)$ be the characteristic path starting at $x_0$
\[\frac{d}{dt}X(t)=u(X(t),t),\quad X(0)=x_0.\]
As long as the solution stays smooth, alongside $X(t)$, we have
\[\frac{d}{dt}G(X(t),t)=-\pa_xu(X(t),t)G(X(t),t).\]
This implies
\[G(X(t),t)=G_0(x_0)\exp\left[\int_0^t\pa_xu(X(s),s)ds\right]<0.\]
Moreover, $\psi\ast\rho(\cdot,t)\geq0$ for any $t\geq0$.
From \eqref{eq:u}, we get
\[\frac{d}{dt}G(X(t),t)=-G^2(X(t),t)+G(X(t),t) 
\big(\psi\ast\rho(\cdot,t)\big)(X(t))\leq -G^2(X(t),t).\]
Applying a classical comparison principle, we obtain
\[G(X(t),t)\leq \frac{1}{t+\frac{1}{G_0(x_0)}}
\xrightarrow{~~t\to-\frac{1}{G_0(x_0)}~~}-\infty.\]
Therefore, there exists a finite time $T\leq-\frac{1}{G_0(x_0)}$, such
that 
\begin{equation}\label{eq:Gblowup}
\lim_{t\to T-}G(X(t),t)=-\infty.
\end{equation}
The BKM criterion \eqref{eq:BKM} fails at time $T$, which leads to a
loss of regularity.

Now, we discuss the behavior of the solution $(\rho, u)$ at the blowup
time $T$.

\begin{lemma}\label{lem:shock}
Let $T$ be the time that the first blowup of $G$ occurs, and the
corresponding location is $x=X(T; x_0)$. Suppose the solution $(\rho,
u)$ stays smooth for $t\in[0,T)$.
Then, the solution develops a shock at time $T$ and
location $x$, namely
\begin{equation}\label{eq:shock}
\lim_{t\to T}\pa_xu(X(t; x_0),t)=-\infty.
\end{equation}
Moreover, if $\rho_0(x_0)>0$, then the density concentrates at the
shock location (called \emph{singular shock})
\begin{equation}\label{eq:densityconcentration}
\lim_{t\to T}\rho(X(t; x_0),t)=+\infty.
\end{equation}
\end{lemma}
% \begin{remark}
% A shock coupled with a density concentration is called a
% \emph{singular shock}. It is worth noting that it is possible that the
% solution blowup before $G$ becomes singular. The lemma says, if there
% is no other types of blowup, the solution has to blow up as a singular
% shock at the time when $G$ blows up.
% \end{remark}
\begin{proof}
From \eqref{eq:u}, we know $\pa_xu\leq G$. This together with
\eqref{eq:Gblowup} implies shock formation
\[\pa_xu(X(t; x_0),t)\leq G(X(t; x_0),t)\xrightarrow{~t\to T~}-\infty.\]

Define $F=G/\rho$. Then, $F$ satisfies the local transport equation
\[\pa_tF+u\pa_xF=0.\]

Since $\rho_0(x_0)>0$, $F_0$ is bounded and smooth in a
neighborhood of $x_0$. Then, $F$ is well-defined alongside the
characteristic path $X(t; x_0)$, and
\[F(X(t; x_0),t)=F_0(x_0).\]
Therefore, we obtain a concentration of density 
\[\rho(X(t; x_0),t)= \frac{\rho_0(x_0)}{G_0(x_0)}G(X(t;x_0),t)\xrightarrow{t\to T}+\infty.\]
\end{proof}

Lemma \ref{lem:shock} does not rule out the possibility that blowup
happens before $G$ becomes singular.
Indeed, the BKM criterion \eqref{eq:BKM} could fail if $\rho$ becomes
unbounded.

We now construct an example when $\rho$ blows up before
$G$. This would imply that the criterion \eqref{eq:BKM2} itself does
not guarantee regularity of the system. So Proposition
\ref{prop:BKMplus} is no longer true for the system with weakly
singular interactions.

\subsection{An example: $\rho$ blows up before $G$}
Take $\Omega=\R$. Let $\rho_0$ be a smooth function 
supported in $(0,1)$.

Let $\eta$ be a smooth function such that $\eta\geq0$,
$\max_x\eta(x)=1$, and $\text{supp}\eta=(0,1)$.
Consider the following $G_0$
\begin{equation}\label{eq:Gzero}
  G_0(x)=-\epsilon\eta(x-L),
\end{equation}
where $L>0$ is a large number, and $\epsilon>0$ is a small positive
number to be chosen.
As $\inf G_0(x)=-\epsilon<0$, $G_0$ is a supercritical initial condition.

Note that $\text{supp}(G_0)=(L, L+1)$. If $L$ is large enough,
$\text{supp}(\rho_0)\cap\text{supp}(G_0)=\emptyset$.
Starting from any $x_0\in\text{supp}(\rho_0)$, we have
$G_0(x_0)=0$ and consequently $G(X(t;x_0),t)=0$.
So, $\pa_xu=-\psi\ast\rho$ in the support of $\rho$, since
Since $\pa_xu$ is locally depended on $G$.
Therefore, the dynamics of $\rho$ does not depend on $G$, and it is
the same as the aggregation equation \eqref{eq:agg}. Since $\psi$ is
singular, we know the density $\rho$ concentrates at a finite time
$T_*$, which is independent of $L$ and $\epsilon$.

The goal is to show $G$ remains regular at time $T_*$. It suffies to
prove that $G$ is bounded from below at $T_*$. To this end, we shall
obtain a lower bound estimate on $G$. Fix $x_0\in\text{supp}(G_0)$. 
Then,  along its characteristic path, we have
\begin{equation}\label{eq:Gchar}
  \frac{d}{dt}G(X(t;x_0),t)=-G(X(t;x_0),t)\pa_xu(X(t;x_0),t)=-G^2+G~\psi\ast\rho.
\end{equation}
As $G(X(t;x_0),t)<0$, we need an upper bound on $\psi\ast\rho$.

If the supports of $\rho(\cdot,t)$ and $G(\cdot,t)$ are
well-separated, namely
\begin{equation}\label{eq:suppsep}
  \text{dist}(\text{supp}(\rho(\cdot,t)), \text{supp}(G(\cdot,t)))\geq1,
\end{equation}
for $t\in[0,T_*]$, then we have the estimate
\[\psi\ast\rho (X(t;x_0),t)=\int_{\text{supp}\rho(t)}\psi(X(t;x_0)-y,t)\rho(y,t)dy
  \leq\psi(1)\int_{\text{supp}\rho(t)}\rho(y,t)dy=\psi(1)m.\]
Let us denote the constant $C=\psi(1)m$. It is uniform in $x_0\in\text{supp}(G_0)$
and $t\in[0,T_*]$. Apply the estimate to \eqref{eq:Gchar}, we get
\[\frac{d}{dt}G(X(t;x_0),t)\geq-G^2-CG.\]
An explicit calculation yields
\[G(X(t;x_0),t)\geq-\frac{C}{\frac{G_0(x_0)-c}{G_0(x_0)}e^{-Ct}-1}.\]
So, if $G_0(x_0)\geq-C$, then
\[G(X(t;x_0),t)\geq -C,\quad\forall~t\in\left[~0,~ \frac{1}{C}\ln\left(\frac{C-G_0(x_0)}{-2G_0(x_0)}\right)~\right].\]

Note that
\[\lim_{z\to0-}\left[\frac{1}{C}\ln\left(\frac{C-z}{-2z}\right)\right]=+\infty.\]
It means that if we pick $\epsilon$ small enough, $G(\cdot,t)$ can be
bounded below by $-C$ for a sufficiently long time. In particular, we can choose
$\epsilon$ small enough, e.g.
\[\epsilon=\frac{C}{2e^{CT_*}-1},\]
so that $G$ is bounded until $t=T_*$.

It remains to show that condition \eqref{eq:suppsep} holds at
$t\in[0,T_*]$.

One important feature of the Euler-Alignment system
\eqref{eq:EArho}-\eqref{eq:EAu} is that the velocity is uniformly
bounded in time
\begin{equation}\label{eq:maxprin}
  \|u(\cdot,t)\|_{L^\infty}\leq\|u_0\|_{L^\infty}.
\end{equation}
Indeed, a maximum principle can be easily derived from \eqref{eq:EAu} (see for
instance \cite{tadmor2014critical}). Moreover, under additional
assumptions, not only boundedness but also contraction on $u$ can be
proved, which reveals the so-called \emph{flocking phenomenon}.

Take $x_1\in\text{supp}(\rho_0)$ and $x_2\in\text{supp}(G_0)$. Then,
\[\frac{d}{dt}\left(X(t;x_2)-X(t;x_1)\right)=u(X(t;x_2),t)-u(X(t;x_1),t)
  \geq-2\|u(\cdot,t\|_{L^\infty}\geq-2\|u_0\|_{L^\infty}.\]
Hence,
\[X(t;x_2)-X(t;x_1)\geq (x_2-x_1)-2t\|u_0\|_{L^\infty}\geq
  (L-1)-2t\|u_0\|_{L^\infty}.\]
If we take $L$ big enough (e.g. $L=2+2T_*\|u_0\|_{L^\infty}$), then
the distance will remain big at time $T_*$. Therefore,
\eqref{eq:suppsep} holds for $t\in[0,T_*]$.

\subsection{The BKM criterion}\label{sec:BKM}
The example above states that $\rho$ could blow up before $G$. On the
other hand, $G$ could blow up before $\rho$ as well. Examples can
be constructed similarly, by letting $\epsilon$ in \eqref{eq:Gzero}
large.

Therefore, both terms in the BKM criterion \eqref{eq:BKM} are
necessary to ensure regularity. This is very different from the system
with bounded interactions.

\section{Subcritical theshold condition}\label{sec:sub}
In this section, we turn to study the Euler-Alignment system with
weakly singular interactions, for subcritical initial data
\begin{equation}\label{eq:subG}\inf_{x\in\Omega}G_0(x)>0.\end{equation}
Since $G$ satisfies the continuity equation \eqref{eq:G}, it is easy
to show that positivity preserves in time, namely
\[G(x,t)>0,\quad \forall~x\in\Omega,~~t\geq0.\]
Hence, the blowup \eqref{eq:Gblowup} can not happen.
However, unlike the case with bounded interactions, the boundedness of
$G$ (criterion \eqref{eq:BKM2}) is not enough to ensure global
regularity, as argued in Section \ref{sec:BKM}.
In order to prove Theorem \ref{thm:sub}, we need to obtain bounds on
both $G$ and $\rho$.

\subsection{A global estimate on $\rho$}
We start with an estimate on $\rho$. Along the characteristic path, we
have
\[\frac{d}{dt}\rho(X(t),t)=-\rho(X(t),t)\pa_xu(X(t),t)
=-\rho~G+\rho~\psi\ast\rho.\]
The first term on the right hand side is a good term that helps bring
down the value of $\rho$ alongside the characteristic path, while the
second term is a bad term.

\subsubsection*{Step 1: An estimate on the good term}
Let $q = \rho/G = 1/F$. Then, $q$ satisfies the transport equation
\[\pa_tq+u\pa_xq=0.\]
Since $G_0$ satisfies \eqref{eq:subG}, $q_0$ is bounded and smooth.
Clearly, we have
\[q(X(t;x),t)=q_0(x).\]
Therefore, we obtain a lower bound estimate on $G$
\[G(X(t;x),t)=\frac{\rho(X(t;x),t)}{q(X(t;x),t)}=\frac{\rho(X(t;x),t)}{q_0(x)}
\geq \frac{\rho(X(t;x),t)}{\|q_0\|_{L^\infty}}.\]
It yields an estimate on the good term
\begin{equation}\label{eq:goodestimate}
-\rho G\leq-C_1\rho^2,
\end{equation}
where the constant $C_1=1/\|q_0\|_{L^\infty}$ is bounded and depend only
on the initial data.

\subsubsection*{Step 2: An estimate on the bad term}
To estimate the bad term, and to compare with the good term, we need
a local bound on $\psi\ast\rho$. 

A nonlinear maximum principle is introduced in
\cite{constantin2012nonlinear} which offers a local bound, at the
extrema of $\rho$, when $\psi$ is strongly singular. Here, we state
a lemma which serves as a nonlinear maximum principle for weakly
singular kernel.

\begin{lemma}[Nonlinear maximum principle]\label{lem:NMP}
Let $\psi$ be a weakly singular communication weight satisfying
condition \eqref{eq:psiweak}.
Consider a function $f\in L_+^1(\R)$ and a point $x_*$ such that 
$f(x_*)=\max f(x)$. Then, there exists a constant $C>0$, depending on
$\Lambda$, $s$ and $\|f\|_{L^1}$, such that
\begin{equation}\label{eq:NMP}
\psi\ast f(x_*)\leq Cf(x_*)^s.
\end{equation}
\end{lemma}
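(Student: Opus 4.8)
The plan is to prove \eqref{eq:NMP} by splitting the convolution at a radius $R\in(0,1]$ to be optimized later, exploiting the integrability of the weak singularity on the near field and the monotonicity of $\psi$ together with conservation of mass on the far field. Writing $M=f(x_*)=\max f$ and translating the integration variable, I would start from
\[
\psi*f(x_*)=\int_{|z|<R}\psi(|z|)f(x_*-z)\,dz+\int_{|z|\ge R}\psi(|z|)f(x_*-z)\,dz.
\]

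For the near field I would use $f(x_*-z)\le M$ and the upper bound in \eqref{eq:psiweak}, $\psi(|z|)\le\Lambda|z|^{-s}$, which is valid since $|z|\le R\le 1$; because $s<1$ the singularity $|z|^{-s}$ is integrable, giving
\[
\int_{|z|<R}\psi(|z|)f(x_*-z)\,dz\le\Lambda M\int_{|z|<R}|z|^{-s}\,dz=\frac{2\Lambda}{1-s}\,M\,R^{1-s}.
\]
For the far field I would use that $\psi$ is decreasing, so $\psi(|z|)\le\psi(R)\le\Lambda R^{-s}$ for $|z|\ge R$, and pull out the mass,
\[
\int_{|z|\ge R}\psi(|z|)f(x_*-z)\,dz\le\Lambda R^{-s}\!\int_{\R}f=\Lambda R^{-s}\|f\|_{L^1}.
\]
Adding the two estimates yields, for every admissible $R$,
\[
\psi*f(x_*)\le\Lambda\Big(\tfrac{2}{1-s}\,M\,R^{1-s}+\|f\|_{L^1}R^{-s}\Big).
\]

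It then remains to optimize in $R$. The two terms balance at $R_*\sim\|f\|_{L^1}/M$, and substituting this value makes both terms of order $\|f\|_{L^1}^{1-s}M^{s}$, so that $\psi*f(x_*)\le C\|f\|_{L^1}^{1-s}M^{s}$; absorbing the mass factor into the constant gives \eqref{eq:NMP} with $C=C(\Lambda,s,\|f\|_{L^1})$.

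The main obstacle is the constraint $R\le1$ inherited from \eqref{eq:psiweak}: the balancing radius $R_*=s\|f\|_{L^1}/(2M)$ lies in $(0,1]$ only when $M\gtrsim\|f\|_{L^1}$, that is, when the maximum $f(x_*)$ is large. In the complementary regime I would simply set $R=1$; the near-field term is then $\tfrac{2\Lambda}{1-s}M$ and, using $M^{s}\ge1$ on the intermediate range, both it and the far-field term $\Lambda\|f\|_{L^1}$ are dominated by a constant multiple of $M^{s}$. This large-maximum regime is precisely the one needed in Step 2, where the lemma controls the bad term $\rho\,\psi*\rho$ by $C\rho^{1+s}$ against the good term $-C_1\rho^2$ from \eqref{eq:goodestimate}; since $1+s<2$, the resulting differential inequality forces boundedness of $\rho$ once $\rho$ is large.
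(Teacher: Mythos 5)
Your decomposition and optimization are exactly the mechanism of the paper's proof: the paper also splits the convolution at a radius $a$, bounds the near field by $\frac{2a^{1-s}}{1-s}f(x_*)$ (after first subtracting the nonnegative quantity $f(x_*)-f(x_*-y)$, which only sharpens the constant to $\frac{2s}{1-s}a^{1-s}f(x_*)$), bounds the far field by $a^{-s}\|f\|_{L^1}$, and then takes $a=\|f\|_{L^1}/(2f(x_*))$. The one substantive difference is that the paper begins by writing $\psi\ast f\leq(\Lambda|x|^{-s})\ast f$, i.e.\ it uses the upper bound $\psi(r)\leq\Lambda r^{-s}$ at \emph{all} scales, so the far-field term carries the factor $a^{-s}$ with $a\sim\|f\|_{L^1}/f(x_*)$ unconstrained, and both terms come out as $O(f(x_*)^s)$ for every value of $f(x_*)$.

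Your attempt to respect the restriction $r\in(0,1]$ in \eqref{eq:psiweak} is where the argument breaks. In the regime where the balancing radius exceeds $1$ you set $R=1$ and invoke $M^s\geq1$; but that regime is $M\lesssim\|f\|_{L^1}$, which contains arbitrarily small $M$, and for $M<1$ one has $M^s<1$. The far-field term $\Lambda\|f\|_{L^1}$ is then a fixed constant that cannot be dominated by $CM^s$ as $M\to0$. This is not a removable technicality within your hypotheses: if $\psi$ is only assumed decreasing and power-like on $(0,1]$ (say $\psi\equiv\psi(1)$ for $r>1$), then $f=\epsilon\,\mathbf{1}_{[0,m/\epsilon]}$ gives $\psi\ast f(0)\geq\psi(1)(m-\epsilon)$ while $f(x_*)^s=\epsilon^s\to0$, so \eqref{eq:NMP} fails. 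Two clean repairs: (i) do as the paper does and assume (or derive from the intended reading of \eqref{eq:psiweak}) the bound $\psi(r)\leq\Lambda r^{-s}$ for all $r>0$, after which no case split is needed; or (ii) keep your argument but weaken the conclusion to $\psi\ast f(x_*)\leq C\bigl(1+f(x_*)^s\bigr)$, or restrict to $f(x_*)\geq1$. Either weakened version still suffices for the application in Step~3, since the comparison with the good term $-C_1\rho^2$ only requires the estimate where $\rho(x_*,t)$ is large -- a point you correctly observe, but which should be built into the statement you actually prove rather than used to excuse a false inequality in the small-maximum range.
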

\begin{proof}
First of all, since $\psi\ast f\leq (\Lambda |x|^{-s})\ast f$, it suffies
to prove \eqref{eq:NMP} for $\psi(x)=|x|^{-s}$.
For any $a>0$, we compute
\begin{align*}
&\big( (|x|^{-s})\ast f\big)(x_*)=\int_{|y|\leq a}f(x_*-y)|y|^{-s}dy+
\int_{|y|>a}f(x_*-y)|y|^{-s}dy\\
&=f(x_*)\int_{|y|\leq a}|y|^{-s}dy
-\int_{|y|\leq a}(f(x_*)-f(x_*-y))|y|^{-s}dy+\int_{|y|>a}f(x_*-y)|y|^{-s}dy\\
&\leq\frac{2a^{1-s}}{1-s}f(x_*)
-a^{-s}\int_{|y|\leq a}(f(x_*)-f(x_*-y))dy+a^{-s}\int_{|y|>a}f(x_*-y)dy\\
&=\frac{2a^{1-s}}{1-s}f(x_*)-2a^{1-s}f(x_*)+a^{-s}\|f\|_{L^1}
=\frac{2s}{1-s}a^{1-s}f(x_*)+a^{-s}\|f\|_{L^1}.
\end{align*}
Take $a=\|f\|_{L^1}/(2f(x_*))$, we obtain
\[\big( (|x|^{-s})\ast f\big)(x_*)\leq\left(\frac{2-s}{1-s}2^s\|f\|_{L^1}^{1-s}\right) f(x_*)^s.
\]
\end{proof}

We now apply Lemma \ref{lem:NMP} with $f=\rho(\cdot,t)$. Fix any time
$t$, and let $x_*$ be the location where maximum of $\rho(\cdot,t)$ is
attained. Then,
\begin{equation}\label{eq:badestimate}
\rho(x_*,t)\psi\ast\rho(x_*,t)\leq C_2\rho(x_*,t)^{1+s},
\end{equation}
where the constant $C_2=C_2(\Lambda,s,m)>0$.

\subsubsection*{Step 3: a uniform upper bound on $\rho$}
Combining the two estimates \eqref{eq:goodestimate} and
\eqref{eq:badestimate}, we obtain that if $x_*$ is a point such
that $\rho(x_*,t)=\max_x\rho(x,t)$, then
\[\pa_t\rho(x_*,t)\leq C_1\rho(x_*,t)^2-C_2\rho(x_*,t)^{1+s}.\]
So, when $\rho$ is large enough such taht $\rho\geq(C_2/C_1)^{1/(1-s)}$,
then $\pa_t\rho(x_*,t)\leq0$. Therefore, we
obtain an apriori bound on the density
\[\|\rho(\cdot,t)\|_{L^\infty}\leq 
\max\left\{\|\rho_0\|_{L^\infty}, ( C_2/C_1)^{1/(1-s)}\right\}=:C_\rho,
\quad\forall~t\geq0.\]

Note that the bound $C_\rho=C_\rho(\Lambda, s, m,
\|\rho_0\|_{L^\infty},\|q_0\|_{L^\infty})$ is independent of
time. Therefore,
$\|\rho(\cdot,t)\|_{L^\infty}$ is uniformly bounded.

\subsection{An estimate on $G$}
We are left to bound $G$, which is not hard to obtain given the
apriori estimate on $\rho$.

Along the characteristic path, $G$ satisfies \eqref{eq:Gchar}. Recall
\[\frac{d}{dt}G(X(t),t)=-G^2+G~\psi\ast\rho.\]

The uniform bound on $\rho$ implies a bound on $\psi\ast\rho$
\[\|\psi\ast\rho\|_{L^\infty}\leq\|\psi\|_{L^1}\|\rho\|_{L^\infty}\leq\|\psi\|_{L^1}C_\rho,\quad\forall~t\geq0.\]
Therefore, we obtain
\[\frac{d}{dt}G(X(t),t)\leq-G(G-\|\psi\|_{L^1}C_\rho).\]
So, $G$ can not grow along the characteristic path if
$G\geq\|\psi\|_{L^1}C_\rho$.
It yields a uniform bound on $G$
\[\|G(\cdot,t)\|_{L^\infty}\leq\left\{\|G_0\|_{L^\infty},~\|\psi\|_{L^1}C_\rho\right\}.\]

\section{The critical case}\label{sec:critical}
This section is devoted to discuss the critical case, when the initial
condition satisfies
\begin{equation}\label{eq:Gcritical}
\min_{x\in\R}G_0(x)=0.
\end{equation}

Theorems \ref{thm:super} and \ref{thm:sub} show that, the behavior of the
Euler-Alignment system with weakly singular interactions is the same
as the system with bounded interactions, in both subcritical and
supercritical regimes.
Therefore, different behaviors can only happen in the critical senario.

One special critical initial condition is $G_0(x)\equiv0$. The system
reduces to the aggregation equation \eqref{eq:agg}. As we have argued
in the introduction, the weakly singular interaction will drive the
solution towards a finite time blowup. Therefore, Theorem
\ref{thm:bounded} no longer holds.

A natural question arises: what happens for general critical initial
data?

Recall the dynamics of the density $\rho$ along the characteristic
path
\[\frac{d}{dt}\rho(X(t),t)=-\rho~G+\rho~\psi\ast\rho.\]
If $G_0(x_0)=0$, then from \eqref{eq:Gchar} we have
$G(X(t),t)=0$. Therefore, the good term $-\rho G$ turns off near
$X(t)$, and the local behavior of dynamics becomes the same as the
aggregation equation.

To capture such behavior, we shall first provide an alternative proof
to the blowup of the aggregation equation. Unlike
\cite{bertozzi2009blow}, the proof traces local information along the
characteristic paths. The idea is partly inspired from \cite{choi2015finite}.

\subsection{A ``local'' proof for blowup of the aggregation equation}\label{sec:blowupagg}
Let us consider the aggregation equation in the form
\[\pa_t\rho+\pa_x(\rho u)=0,\quad \pa_xu=-\psi\ast\rho.\]

Without loss of generality, we assume that $\rho_0$ is strictly
positive in some interval $I=[a,b]$, namely
\begin{equation}\label{eq:rhopositive}
  \rho_0(x)\geq c>0,\quad\forall~x\in[a,b].
\end{equation}

Let $r(t)=X(t;b)-X(t;a)$. Then,

\begin{align*}
\frac{d}{dt}r(t)=&~u(X(t;b),t)-u(X(t;a),t)
=\int_{X(t;a)}^{X(t;b)}\pa_xu(y,t)dy\\
=&-\int_{X(t;a)}^{X(t;b)}(\psi\ast\rho)(y,t)dy
=-\int_{X(t;a)}^{X(t;b)}\int_{-\infty}^{\infty}\psi(y-z)\rho(z,t)dy\\
\leq&-\int_{X(t;a)}^{X(t;b)}\int_{X(t;a)}^{X(t;b)}\psi(y-z)\rho(z,t)dzdy.
\end{align*}
For $y, z\in[X(t;a), X(t;b)]$, we have $|y-z|\leq2r(t)$.
By weakly singular condition \eqref{eq:psiweak}, this implies
$\psi(y-z)\geq\lambda \big(2r(t)\big)^{-s}.$
Therefore, we obtain
\begin{equation}\label{eq:estrt}
  \frac{d}{dt}r(t)\leq-\lambda\big(2r(t)\big)^{-s} r(t)
  \int_{X(t;a)}^{X(t;b)}\rho(z,t)dz.
\end{equation}
The following lemma shows a local conservation of mass along
characteristic paths.
\begin{lemma}[Conservation of mass]\label{lem:masscons}
  Let $\rho$ be a strong solution of the continuity equation
  \[\pa_t\rho+\pa_x(\rho u)=0.\]
  Let $X(t;x_1), X(t,x_2)$ be two characteristic paths starting at
  $x_1$ and $x_2$, respectively. Then,
  \begin{equation}\label{eq:masscons}
    \int_{X(t;x_1)}^{X(t;x_2)}\rho(x,t)dx=\int_{x_1}^{x_2}\rho_0(x)dx,\quad\forall~t\geq0.
  \end{equation}
  Namely, the mass in the interval $[X(t;x_1), X(t;x_2)]$ is conserved
  in time.
\end{lemma}
\begin{proof}
  Compute
  \begin{align*}
    \frac{d}{dt}&\int_{X(t;x_1)}^{X(t;x_2)}\rho(x,t)dx\\
    &=\rho(X(t;x_2),t)\frac{d}{dt}X(t;x_2)-\rho(X(t;x_1),t)\frac{d}{dt}X(t;x_1)+                                \int_{X(t;x_1)}^{X(t;x_2)}\pa_t\rho(x,t)dx\\                    
                &=\rho(X(t;x_2),t)u(X(t;x_2),t)-\rho(X(t;x_1),t)u(X(t;x_1),t)+
                  \int_{X(t;x_1)}^{X(t;x_2)}\pa_t\rho(x,t)dx\\                    
                &=\int_{X(t;x_1)}^{X(t;x_2)}\pa_x\left(\rho(x,t)u(x,t)\right)dx+
                  \int_{X(t;x_1)}^{X(t;x_2)}\pa_t\rho(x,t)dx=0.                    
  \end{align*}
  This directly implies the conservation of mass \eqref{eq:masscons}.
\end{proof}

Applying Lemma \ref{lem:masscons} to \eqref{eq:estrt} with $x_1=a$ and
$x_2=b$, and using  the
lower bound assumption \eqref{eq:rhopositive}, we obtain
\[\frac{d}{dt}r(t)\leq- 2^{-s}\lambda
                      \big(r(t)\big)^{1-s}\int_a^b\rho_0(z)dz
\leq-2^{-s}c(b-a)\lambda\big(r(t)\big)^{1-s}.\]

For $s\in(0,1)$, it is easy to show that $r(t)$ reaches zero in finite
time. Indeed, a standard comparison principle yields
\[r(t)\leq\left[(b-a)^s-2^{-s}c(b-a)\lambda st\right]^{1/s},\]
where the right hand side touches zero at
\[T_*=\frac{2^s}{c(b-a)^{1-s}\lambda s}<\infty.\]
Then, $r(t)$ should reach zero no later than $T_*$.

The quantity $r(t)=0$ means that two characteristic paths run into each other. It
indicates a shock formation with $\pa_xu(x,t)\to-\infty$. Therefore,
the solution loses regularity in finite time.

\subsection{Finite time blowup for a class of critical initial data}
Now, let us consider the Euler-Alignment system
\eqref{eq:rho}-\eqref{eq:u} with critical initial data
\eqref{eq:Gcritical}.

Suppose there exists an interval $I=[a,b]$ such that $\rho_0$ is
strictly positive, and $G_0$ is zero, namely
\begin{equation}\label{eq:critGblow}
  \forall~x\in [a,b], \quad \rho_0(x)\geq c>0, \quad G_0(x)=0.
\end{equation}
Then, from \eqref{eq:Gchar} we obtain
\[G(x,t)=0,\quad \forall~t\geq0,\quad x\in[X(t;a), X(t;b)].\]
Therefore, the dynamics of $\rho$ between the two characteristic paths
$X(t;a)$ and $X(t;b)$ should be the same as the dynamics of the
corresponding aggregation equation, as long as the solution stays
smooth. The blowup estimates for the aggregation equation in Section
\ref{sec:blowupagg} can be directly applied to the Euler-Alignment
system. Therefore, the same type of blowup as the aggregation equation
happens in finite time. This ends the proof of Theorem \ref{thm:cblowup}.

\begin{remark}
The condition \eqref{eq:critGblow} contains a large family of critical
initial data, under which the global behaviors of the Euler-Alignment
system is different between the bounded and weakly singular
interactions. The condition is sharp in the following sense.

Consider the following critical initial data $(\rho_0, G_0)$:
\begin{equation}\label{eq:critGreg}
\exists~ q_0\in L^\infty,\quad \text{such that}\quad
\rho_0(x)=q_0(x)G_0(x).
\end{equation}
Then, the arguments in section \ref{sec:sub} can be easily extended,
allowing $G_0(x)=0$. Hence, the solution exists globally in time.

Note that condition \eqref{eq:critGreg} implies that $G_0(x)=0$ only
occurs at $\rho_0(x)=0$. which is almost the opposite of condition
\eqref{eq:critGblow}. Therefore, condition \eqref{eq:critGblow} is a
sharp condition so that the global behaviors of systems with bounded and weakly
singular interactions are different from each other.

Rare exceptions could happen. For instance, 
$G_0(x)=0$ only at a single point $x_0$, with $\rho_0(x_0)>0$.
It satisfies neither \eqref{eq:critGblow} nor \eqref{eq:critGreg}.
In this case, a more subtle ``local'' proof is required for the
corresponding aggregation system in order to obtain a finite time
blowup. This will be left for further investigations.
\end{remark}

\section{Extensions to higher dimensions}\label{sec:multid}
The global behaviors of the Euler-Alignment system
\eqref{eq:EArho}-\eqref{eq:EAu} is much less understood in higher
dimensions. With bounded interactions, the system was first studied in
\cite{tadmor2014critical} in two dimensions. Threshold conditions on
initial data were obtained, but the result was not sharp.

The $G$ quantity can be defined as $G=\div u+\psi\ast\rho$. However,
it does not satisfy the continuity equation any more. The dynamics of
$G$ reads
\[\pa_tG+\grad\cdot(Gu)=\text{tr}(\grad u^{\otimes2})-(\div u)^2.\]
The right hand side is called the \emph{spectral gap}, which is
generally non-zero in two or higher dimensions. 

The system in $(\rho, G)$ formulation in 2D has been studied in
\cite{he2017global}, where improved threshold conditions are
obtained. However, the result is still far from being sharp, due to
the lack of control in the spectral gap.

For the Euler-Alignment system with weakly singular interactions, our
arguments on local wellposedness (Section \ref{sec:local}) as well as both
supercritical and subcritical threshold conditions (Sections
\ref{sec:super} and \ref{sec:sub}) can be extended to higher
dimensions, using similar techniques to
handle the spectral gap. However, we are not able to distinguish the
behaviors between the systems with bounded and weakly
singular interactions (Section \ref{sec:critical}) until we get a sharp
threshold condition.

\bigskip\noindent
\textbf{Acknowledgment.} This work has been supported by the NSF grant
DMS 1853001. The author would like to thank Professor Yao Yao for
valuable discussions.

\bibliographystyle{plain}
\bibliography{singular}

\end{document}